\newtheorem{theorem}{Theorem}[section]
\newtheorem{definition}[theorem]{Definition}
\newtheorem{lemma}[theorem]{Lemma}
\newtheorem{proposition}[theorem]{Proposition}
\newtheorem{observation}[theorem]{Observation}
\def\ni{\noindent}
\title{Ramsey Sequences with Bounded Clique Number}
\date{\today}
\begin{document}
	\author {Abhishek Girish Aher\footnote{E-mail: abhishekaher99@gmail.com} ~ and Aparna Lakshmanan S\footnote{E-mail: aparnals@cusat.ac.in, aparnaren@gmail.com}\\
		Department of Mathematics\\	Cochin University of Science and Technology\\Cochin -
		22, Kerala, India}
	
	\maketitle
	
	\begin{abstract}
		A sequence of graphs $ \{G_k\} $ is a Ramsey sequence if for every positive integer $ k $, the graph $ G_k $ is a proper subgraph of $ G_{k+1} $, and there exists an integer $n > k$ such that every red-blue edge coloring of $ G_n $ contains a monochromatic copy of $ G_k $. Among the wide range of open problems in Ramsey theory, an interesting open question is ``Does there exist an ascending sequence $\{G_k\}$ with $\lim_{k \to \infty} \chi(G_k) = \infty$ and $\lim_{k \to \infty} \omega(G_k) \neq \infty$ that is a Ramsey sequence?". In this paper, we solve this problem  by demonstrating that the sequence of General shift graphs $Sh(n,k,l)$ is a Ramsey sequence which satisfies the conditions given in the open problem. We further present an alternative proof of the generalised Ramsey theorem by establishing explicitly that $Sh(n,2)$ is a Ramsey sequence for $k$-edge-colorings, and by observing the natural bijection between the set of all $t$-element subsets of $[n]$ and the edge set $E(Sh(n,t-1))$.
		
		\ni\line(1,0){395}\vspace{0.2cm}\\
		\ni{\bf Keywords:} Ramsey sequence, Erdős–Hajnal shift graphs, Triangle-free graphs
		\vspace{0.3cm}\\
		\ni {\bf AMS Subject Classification:} 05C35, 05C55, 05C15\\
		\ni\line(1,0){395}
	\end{abstract}

	\medskip\ni
	
	\section{Introduction}
	
	One of the most prominent branches of Extremal Graph Theory is Ramsey Theory which originated from a specific case of a result by the British Philosopher, Economist and Mathematician Frank Ramsey, presented in his paper titled ``On a Problem of Formal Logic" \cite{ramsey-1930} published in 1930. Many years later, in 1974, Frank Harary \cite{burr-1974} examined Ramsey's mathematical writings, emphasizing the lasting significance of his contributions, even though Ramsey passed away at the young age of 26. Ramsey's theorem is stated as follows.

	\begin{theorem}\label{thm1} \cite{chartrand-zhang-2020-article} For any $k+1 \geq 3$ positive integers $t, n_1, n_2$, $ \ldots, n_k$, there exists a positive integer $N$ such that if each of the $t$-element subsets of the set $\{1, 2, \ldots, N\}$ is colored with one of the $k$ colors $1, 2, \ldots, k$, then for some integer $i$ with $1 \leq i \leq k$, there is a subset $S$ of $\{1, 2, \ldots, N\}$ containing $n_i$ elements such that every $t$-element subset of $S$ is colored $i$.
	\end{theorem}

	More details about Ramsey Theory can be found in the book by Ronald Graham, Bruce Rothschild, Joel Spencer and Jozsef Solymosi~\cite{graham-et-al-2017}. To demonstrate Ramsey theory's connection to graph theory, let $\{1, 2, \ldots, N\}$ be the vertices of the complete graph $K_N$. Then, for $t=2$, assigning one of the $k$ colors $\{1, 2, \ldots, k\}$ to each of the 2-element subsets corresponds to coloring the edges of $K_N$. The most well-known instance arises when $k=2$, commonly using colors red and blue, resulting in a red-blue edge coloring of $K_N$ and offers a specific interpretation of Ramsey’s Theorem as follows.
 	
	\begin{theorem}\cite{chartrand-zhang-2020-article}[Ramsey’s Theorem]
		For any two positive integers $s$ and $t$, there exists a positive integer $N$ such that for every red-blue coloring of $K_N$, there is a complete subgraph $K_s$ all of whose edges are colored red (a red $K_s$) or a complete subgraph $K_t$ all of whose edges are colored blue (a blue $K_t$).
	\end{theorem}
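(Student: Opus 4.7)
The plan is to derive this graph-theoretic statement as a direct specialization of the general Ramsey theorem stated just above. In that theorem, I would take the subset-size parameter to be $2$, the number of colors to be $k=2$, and set $n_1=s$, $n_2=t$. Since the $2$-element subsets of the vertex set $\{1,\ldots,N\}$ correspond bijectively to edges of $K_N$, assigning one of two colors (red for color $1$, blue for color $2$) to every such subset is precisely a red-blue edge coloring of $K_N$. The general theorem then supplies an integer $N$ for which any such coloring contains either an $s$-vertex subset with every one of its $2$-element subsets colored red, or a $t$-vertex subset with every one of its $2$-element subsets colored blue. Re-reading this in graph-theoretic language gives a red $K_s$ or a blue $K_t$ inside $K_N$, which is exactly the required conclusion.

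Should a self-contained argument be desired instead, I would use a short induction on $s+t$. The base cases $s=2$ or $t=2$ are immediate: taking $N=t$ (resp.\ $N=s$) works, since a red-blue coloring of $K_N$ that avoids a red edge must color every edge blue, giving a blue $K_t$. For the inductive step, write $R(a,b)$ for the least valid $N$ with parameters $(a,b)$, set $N = R(s-1,t) + R(s,t-1)$, fix an arbitrary vertex $v$ of $K_N$, and partition the remaining $N-1$ vertices according to whether the incident edge at $v$ is red or blue. By the pigeonhole principle, one class has size at least $R(s-1,t)$ or $R(s,t-1)$; invoking the inductive hypothesis on the complete subgraph induced by that class and re-attaching $v$ when appropriate produces the desired monochromatic clique.

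Since the theorem is a classical specialization, there is essentially no substantive obstacle. The only care required lies in the re-indexing of variables, in particular keeping distinct the two uses of the letter $t$: the "subset-size" parameter in the general theorem versus the "blue-clique size" parameter in the statement being proved. Making explicit the set-theoretic identification of $2$-element subsets of the vertex set with edges of $K_N$ removes any remaining ambiguity, and the rest is book-keeping.
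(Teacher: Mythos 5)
Your proposal is correct and follows essentially the same route as the paper, which presents this statement as the specialization $t=2$, $k=2$, $n_1=s$, $n_2=t$ of the general Ramsey theorem stated immediately before it, identifying $2$-element subsets of $\{1,\dots,N\}$ with edges of $K_N$. Your optional self-contained induction via $R(s-1,t)+R(s,t-1)$ is also valid, but the paper does not pursue it.
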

	
	Based on this formulation of Ramsey’s Theorem, we have the following definition.
	
	\begin{definition}
		For any two positive integers $s$ and $t$, there exists a smallest positive integer $n$ such that every red-blue coloring of the complete graph $K_n$ contains either a red $K_s$ or a blue $K_t$. This minimal integer $n$ is referred to as the Ramsey number for $K_s$ and $K_t$, denoted by $R(K_s, K_t)$, or more commonly by $R(s, t)$.
	\end{definition}

	The existence of classical Ramsey numbers $R(s, s)$, established indirectly by Ramsey \cite{burr-1974}, and bipartite Ramsey numbers $BR(s, s)$, introduced by Beineke and Schwenk \cite{Beineke -Schwenk -1975} for every positive integer $s$, forms a cornerstone of Ramsey Theory. Chartrand and Zhang proposed a novel Ramsey concept, detailed in~\cite{chartrand-zhang-2020-book, chartrand-zhang-2020-article, chartrand-zhang-2021}, involving ascending graph sequences.\par
	
	A sequence of graphs $\{G_k\}$ is \textit{ascending} if $G_k$ is isomorphic to a proper subgraph of $G_{k+1}$ for all positive integers $k$. Such a sequence is a \textit{Ramsey sequence} if, for every $k$, there exists an integer $n > k$ such that every red-blue coloring of $G_n$ yields a monochromatic $G_k$, either red or blue. Results by Ramsey \cite{burr-1974} and by Beineke and Schwenk \cite{Beineke -Schwenk -1975} demonstrate that $\{K_k\}$ and $\{K_{k,k}\}$ are Ramsey sequences. The following proposition \cite{chartrand-zhang-2020-article} is one of the major results related to Ramsey sequences.
	
	\begin{proposition}[\cite{chartrand-zhang-2020-article}, Proposition 2.1]
		If $\{G_k\}$ is a Ramsey sequence, then either every graph $G_k$ is bipartite or $\lim_{k \to \infty} \chi(G_k) = \infty$.
	\end{proposition}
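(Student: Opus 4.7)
The plan is to argue by contrapositive: if $\lim_{k\to\infty}\chi(G_k)\neq\infty$ then every $G_k$ is bipartite. Because the sequence is ascending, $\chi(G_k)\le\chi(G_{k+1})$ and the integer sequence $(\chi(G_k))$ is non-decreasing, so failure of the limit to be infinite means the sequence stabilizes at some value $c'\ge 1$: there is an index $k_1$ with $\chi(G_k)=c'$ for every $k\ge k_1$. Since $G_k$ embeds as a subgraph of $G_{k_1}$ for $k<k_1$, it suffices to prove $c'\le 2$. I assume $c'\ge 3$ for contradiction and invoke the Ramsey hypothesis at $k=k_1$ to pick $n>k_1$ such that every red--blue edge coloring of $G_n$ contains a monochromatic $G_{k_1}$; the aim is to exhibit a coloring of $G_n$ in which every monochromatic subgraph has chromatic number strictly less than $c'=\chi(G_{k_1})$, which rules out a monochromatic copy of $G_{k_1}$ and contradicts the choice of $n$.

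To build such a coloring, fix a proper vertex coloring $\varphi\colon V(G_n)\to\{1,\dots,c'\}$, split the palette into $A=\{1,\dots,\lceil c'/2\rceil\}$ and $B=\{1,\dots,c'\}\setminus A$, and color an edge $uv$ red if $\varphi(u)$ and $\varphi(v)$ lie in different parts of this splitting, and blue otherwise. The red subgraph is bipartite with bipartition $(\varphi^{-1}(A),\varphi^{-1}(B))$, so it has chromatic number at most $2$. The blue subgraph has no edge crossing between $\varphi^{-1}(A)$ and $\varphi^{-1}(B)$, hence it is a disjoint union of graphs on those two sides, each properly colored by $\varphi$ using at most $\lceil c'/2\rceil$ colors; its chromatic number is therefore at most $\lceil c'/2\rceil$. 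For every $c'\ge 3$ both $2$ and $\lceil c'/2\rceil$ are strictly less than $c'$, producing the required contradiction.

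I expect the main obstacle to be producing a single red--blue edge coloring that works uniformly for all $c'\ge 3$. For $c'\le 4$ the standard trick of partitioning the edges into two bipartite subgraphs already suffices, but it breaks down once $c'\ge 5$, so one needs a construction whose monochromatic subgraphs may have chromatic number as large as $\lceil c'/2\rceil$ yet stay strictly below $c'$. The splitting-of-the-palette device above supplies exactly this trade-off, and once it is in place the chromatic-number bounds on the two color classes follow immediately from the properness of $\varphi$, giving $c'\le 2$ and the bipartiteness of every $G_k$.
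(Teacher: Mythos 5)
Your proof is correct. Note first that the paper you are comparing against does not actually prove this statement: Proposition 2.1 is quoted from Chartrand and Zhang's article with a citation and no argument, so there is no in-paper proof to measure yours against. On its own terms your argument is sound: the ascending property makes $(\chi(G_k))$ a non-decreasing integer sequence, so failure to diverge forces eventual constancy at some $c'$; if $c'\ge 3$, your palette-splitting coloring of $G_n$ (red for edges crossing between $\varphi^{-1}(A)$ and $\varphi^{-1}(B)$, blue otherwise) yields $\chi(G_R)\le 2$ and $\chi(G_B)\le\lceil c'/2\rceil$, both strictly below $c'$, and since the chromatic number is monotone under subgraphs no color class can contain a copy of $G_{k_1}$, contradicting the Ramsey witness $n$. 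The standard proof of this fact (the one Chartrand and Zhang give) uses the classical product bound: from a proper coloring of $G_n$ with colors taken as ordered pairs in $\{1,\dots,c'-1\}^2$ one colors an edge red or blue according to whether its endpoints differ in the first coordinate, obtaining $\chi(G_R)\le c'-1$ and $\chi(G_B)\le c'-1$; this is available whenever $\chi(G_n)\le (c'-1)^2$, which holds for $c'\ge 3$ since the sequence has stabilized. Your split is an asymmetric variant of the same decomposition idea ($2\cdot\lceil c'/2\rceil\ge c'$, so it is consistent with the product bound) and is, if anything, slightly more self-contained since you never need the general inequality $\chi(G)\le\chi(G_1)\chi(G_2)$; the symmetric product coloring buys a stronger quantitative conclusion ($\chi(G_n)>(\chi(G_k)-1)^2$ for the Ramsey witness $n$), which is what one iterates to get the divergence statement directly rather than by contradiction, but for the proposition as stated both routes suffice.
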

	
	The converse of the above proposition is not true. For instance, in \cite{chartrand-zhang-2020-article} it is proved that the sequence of hypercubes $\{Q_k\}$ forms an ascending sequence of bipartite graphs, but is not a Ramsey sequence and the sequence $S = \{M^k(K_3)\}$ is ascending, with $\lim_{k \to \infty} \omega(M^k(K_3)) = 3$ and $\lim_{k \to \infty} \chi(M^k(K_3)) = \infty$, but $S$ is not a Ramsey sequence. Though the converse is not true in general, we have the following theorem which gives a subclass of ascending sequences of graphs with chromatic number tending to infinity due to the clique number tending to infinity, which turns out to be Ramsey sequences.
	
	\begin{theorem}[\cite{chartrand-zhang-2020-article}, Theorem 2.14]
		If $\{G_k\}$ is an ascending sequence of graphs for which $\lim_{k \to \infty} \omega(G_k) = \infty$, then $\{G_k\}$ is a Ramsey sequence.
	\end{theorem}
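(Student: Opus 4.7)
The plan is to reduce to the classical Ramsey theorem for complete graphs. Fix $k$, and let $p = |V(G_k)|$. Since $G_k$ is a graph on $p$ vertices, it is a (not necessarily spanning in the edge sense) subgraph of $K_p$, so any red-blue coloring of a copy of $K_p$ contains a monochromatic copy of $K_p$ in some color, which in turn contains a monochromatic copy of $G_k$. Therefore, to produce a monochromatic $G_k$ inside some $G_n$, it suffices to produce a monochromatic $K_p$ inside $G_n$.

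Next I would invoke Ramsey's theorem in the two-color diagonal form: let $N = R(p,p)$, so every red-blue coloring of $K_N$ contains a monochromatic $K_p$. Because $\lim_{k\to\infty} \omega(G_k) = \infty$, the hypothesis guarantees that there exists an index $n > k$ with $\omega(G_n) \geq N$. Fix such an $n$; then $G_n$ contains $K_N$ as a subgraph.

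Finally, I would conclude by restriction: take any red-blue coloring $c$ of the edges of $G_n$, and consider its restriction to the edges of a fixed copy of $K_N$ inside $G_n$. By the choice of $N$, this restriction contains a monochromatic $K_p$, which contains a monochromatic copy of $G_k$ in the same color. Since $k$ was arbitrary, $\{G_k\}$ is a Ramsey sequence.

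The argument is essentially routine; the only conceptual step is observing that one does not need any fine structural control over $G_n$ beyond the presence of a sufficiently large clique, and that a monochromatic complete graph on $|V(G_k)|$ vertices automatically contains a monochromatic $G_k$. The main (minor) obstacle is the bookkeeping that ensures $n > k$ can actually be chosen, but this is immediate from $\omega(G_k) \to \infty$ together with the ascending property, so no quantitative estimates on $R(p,p)$ or on $\omega(G_n)$ are needed.
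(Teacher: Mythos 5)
Your argument is correct and is the standard one; note that the paper you are working from does not actually prove this statement --- it quotes it as Theorem~2.14 of Chartrand and Zhang --- so there is no in-paper proof to compare against, but your reduction (pick $p=|V(G_k)|$, find $n>k$ with $\omega(G_n)\ge R(p,p)$, restrict the coloring to a clique of that size, extract a monochromatic $K_p\supseteq G_k$) is exactly the intended argument. One sentence in your first paragraph is garbled: ``any red-blue coloring of a copy of $K_p$ contains a monochromatic copy of $K_p$'' is false as written; what you mean (and what you actually use) is that a \emph{monochromatic} $K_p$ contains a monochromatic copy of $G_k$, so delete or reword that clause. Everything else, including the observation that $n>k$ can be chosen because $\omega(G_n)\to\infty$, is fine.
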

	
	However, there exist numerous graph sequences $\{G_k\}$ where the chromatic number tends to infinity, whereas the clique number does not. This observation prompted an open question in~\cite{chartrand-zhang-2020-article, chartrand-zhang-2021}:\par
	
	\ni{\bf Open problem:} Does there exist an ascending sequence $\{G_k\}$ with $\lim_{k \to \infty} \chi(G_k) = \infty$ and $\lim_{k \to \infty} \omega(G_k) \neq \infty$ that is a Ramsey sequence? \par
	
	In this paper, we solve this open problem by providing an ascending sequence of triangle-free graphs - the general shift graphs, for which the chromatic number tends to infinity and is a Ramsey sequence.

    We further present an alternative proof of the Generalised Ramsey theorem by establishing explicitly that $Sh(n,2)$ is a Ramsey sequence for $k$-edge-colorings, and by observing the natural bijection between the set of all $t$-element subsets of $[n]$ and the edge set $E(Sh(n,t-1))$.
    
	\section{Preliminaries}

	The \textit{chromatic number} of a graph $ G $, denoted by $ \chi(G) $, is the smallest positive integer $ k $ such that the vertices of $ G $ can be colored with $ k $ colors, where no two adjacent vertices receive the same color \cite{whitney1932coloring}. A \textit{clique} in a graph $ G $ is a subset of vertices $ S \subseteq V(G) $ such that every pair of distinct vertices in $ S $ are adjacent to each other \cite{luce1949method}. The \textit{clique number} of a graph $ G $, denoted $ \omega(G) $, is the size of the largest clique in $ G $, i.e., the maximum number of vertices in a subset $ S \subseteq V(G) $ such that the subgraph induced by $ S $ is complete \cite{erdos1935combinatorial}. 
	
	For integers $N \geq k \geq 2$, the \textit{shift graph} $Sh(N,k)$ \cite{erdos-hajnal-1968} is the graph whose vertices are all $k$-element subsets of $[N] = \{1,2,\dots,N\}$ and two vertices  $X = \{x_1 < x_2 < \cdots < x_k\}$ and $Y = \{y_1 < y_2 < \cdots < y_k\}$ are adjacent in $Sh(N,k)$ if and only if $ x_{i+1} = y_i \quad \text{for all } i = 1,2,\dots,k-1$. This construction, originally introduced by Erdős and Hajnal \cite{ErdosHajnal1966}and further investigated in \cite{duffus-lefmann-rodl-1995} and \cite{furedi-hajnal-rodl-trotter-1992}. The Shift graphs are classical example of triangle-free graphs having chromatic number growing logarithmically  with $n$.\par
	
	The \textit{edge coloring} of a graph $ G $ is an assignment of colors to the edges of $ G $ and a \textit{proper edge coloring} is an edge coloring such that no two adjacent edges share the same color \cite{vizing1964chromatic} 
	
	A proper edge coloring can also be seen as function $f : E \to S$,
	where $ S $ is a set of colors, such that for any two edges $ e, h \in E $ sharing a common end vertex,
	we have $ f(e) \neq f(h) $. The \textit{chromatic index} of $ G $, denoted $ \chi'(G) $,
	is the minimum size of $ S $ permitting such a coloring \cite{green2015edge}.

	In Ramsey theory, we consider an edge coloring that need not be proper. For instance, an \textit{$r$-edge coloring} of a graph $ G = (V, E) $ is a function $ f: E \to C $, where $ C = \{1,2\dots ,r\}$ is a set of r colors , partitioning the edges into $r$-color classes to study monochromatic subgraphs \cite{ramsey-1930}.\par
	
	In the following section, we present the solution of the open problem ``Does there exist an ascending sequence $\{G_k\}$ with $\lim_{k \to \infty} \chi(G_k) = \infty$ and $\lim_{k \to \infty} \omega(G_k) \neq \infty$ that is a Ramsey sequence?". For convenience in writing the proof, we introduce the following notation. Given an $r$-edge coloring of the Erdős-Hajnal shift graph $Sh(n,2)$, let 
    $$ N^+([i, j]) = \{[j, k] : ([i, j][j, k]) \in E(G)\},$$ 
	$$ N^+_l([i, j]) = \{[j, k] : f(([i, j][j, k])) = l\}, \text{ and }$$ 
	$$ E^+_l([i, j]) = \{([i,j][j, k]) : [j, k]\in N^+_l([1,2])\} $$

	For all the graph-theoretic terminology and notations not
	mentioned here, we refer to Balakrishnan and Ranganathan \cite{Bal}.
    \section{Triangle-free Ramsey Sequence with Chromatic Number Tending to Infinity}
    \begin{definition}
        For positive integers $k > l > 0$ and $N \ge 2k$,
        the \textbf{General Shift Graph} $Sh(N, k, l)$ is a graph whose vertices are all $k$-element subsets of $[N] = \{1, 2, 3, \dots, N\}$. 

        Two vertices $X = \{x_1 < x_2 < \dots < x_k\}$ and $Y = \{y_1 < y_2 < \dots < y_k\}$ are adjacent in $Sh(N, k, l)$ if and only if 
        \[
        x_{k-l+i} = y_i \quad \text{for all } 1 \le i \le l.
        \]

    \textbf{Note:} The ordinary Shift Graph $Sh(N, k)$ is a             special case of the generalised shift graphs when $l = k-      1$, i.e., $Sh(N, k, k-1)$.
    \end{definition}

    \begin{observation}\label{observe1}
		$Sh(N,k,l)$ is Triangle-Free 
	\end{observation}
	\begin{proof}
		It follows from the definition that any two adjacent vertices cannot have a common neighbour.

    Let X--Y is an edge in $Sh(N,k,l)$ then
    \[
    X = \{x_1 < \dots < x_{k-l} < x_{k-l+1} < \dots < x_k \},
    \]
    \[
    Y = \{x_{k-l+1} < \dots < x_k < x_{k+1} < \dots < x_{2k-l} \}.
    \]

    No vertex $Z$ can be adjacent to both $X$ and $Y$ , as $x_k \neq x_{2k-l}$. 

\end{proof}

    \begin{observation}\label{observe2}
		 $\chi(Sh(N,k,l)) \ge \chi(Sh(\left\lfloor \frac{N-l}{k-l} \right\rfloor+1,2)) = \lceil\log_2 (\left\lfloor \frac{N-l}{k-l} \right\rfloor+1)\rceil$ 
	\end{observation}
	\begin{proof}
		Consider the subgraph $H$ induced by the following vertex set:
    \[
    V' = \Bigl\{ v \in V \;\Big|\; 
    v = \bigl(a(k-l)+1,\, a(k-l)+2,\, \dots,\, (a+1)(k-l),\, 
    b(k-l)+1,\, \dots,\, b(k-l)+l\bigr)
    \Bigr\}
    \]
    where $ 0 \le a < b \le \left\lfloor \frac{N-l}{k-l} \right\rfloor $ \\
    Each vertex $v \in H$ is in bijection with the vertex $[a+1, b+1]$ of shift graph 
    $Sh\left( \left\lfloor \frac{N-l}{k-l} \right\rfloor+1, 2 \right)$ Furthermore, two vertices $V_1, V_2 \in H$ are adjacent if and only if their corresponding vertices $[a,b]$ and $[c,d]$ are adjacent in $Sh\left( \left\lfloor \frac{N-l}{k-l} \right\rfloor+1, 2 \right)$, that is, when $b=c$ or $a=d$. hence $H \cong Sh(\left\lfloor \frac{N-l}{k-l} \right\rfloor+1,2)$ and we have $\chi(Sh(\left\lfloor \frac{N-l}{k-l} \right\rfloor+1,2))=\lceil\log_2 (\left\lfloor \frac{N-l}{k-l} \right\rfloor+1)\rceil $
	\end{proof}
    \begin{theorem}
        The sequence $\{G_n\} = Sh(n, k, l) $ forms a Ramsey sequence.
    \end{theorem}

    \begin{proof}
        Clearly $\{G_n\}$ is an ascending sequence, as larger $n$ naturally contain copies of smaller ones.

    It suffices to prove that for every integer $n > 0$, there exists $N > 0$ such that in \emph{every} red-blue edge-coloring of $G_N = Sh(N,k,l)$, there exists a monochromatic copy of $G_n = Sh(n,k,l)$.

    The edge set of $Sh(n,k,l)$ is in natural bijection with the set of all $(2k-l)$-tuple in  $[n] = \{1, 2, 3, \dots, n\}$. Specifically, each edge of the form
    $(x_1 < \dots < x_k)-(x_{k-l+1} < \dots < x_{2k-l})$
    corresponds uniquely to the $(2k-l)$-tuple $(x_1 < x_2 < \dots < x_{2k-l})$.

    Therefore, any 2-edge-coloring of $Sh(n,k,l)$ induces a 2-coloring of the $(2k-l)$-element subsets of $[n]$.

    By Theorem \ref{thm1} (applied with parameters $k=2$, $t=2k-l$, and $n_1=n_2=n$), there exists an integer $N$ and a subset $S' \subset [N]$ with $|S'| = n$ such that every $(2k-l)$-element subset of $S'$ is of same color.

    Let $S' = \{x_1 < x_2 < \dots < x_n\}$. Then all edges in $Sh(N,k,l)$ whose vertices are formed from elements of $S'$ of the form
 $(x_{i_1} < \dots < x_{i_k}) - (x_{i_{k-l+1}} < \dots < x_{i_{2k-l}})$ receive the same color. This induces a monochromatic copy of $Sh(n,k,l)$.

    \end{proof}

    \section{Alternate Proof of Genralised Ramsey Theorem using the Ramsey Sequence $Sh(n,2)$}
    At first we are extending definition of Ramsey sequence for general r-edge coloring, then will prove that Sh(n,2) is a Ramsey sequences.\par

    \begin{definition}
         A sequence of graphs $\{G_k\}$ is \textit{ascending} if $G_k$ is isomorphic to a proper subgraph of $G_{k+1}$ for all positive integers $k$. An Ascending Sequence of Graphs $G_k$ is a \textbf{Ramsey Sequence } if for every positive integer k there exist $N > k $ such that every r-edge coloring of $G_N$ results in monochromatic $G_k$
    \end{definition}
    
	\begin{lemma} \label{lem1}
		Let $G$ be a graph that contains an Erdős--Hajnal shift graph $Sh(n,2)$ as a subgraph, where $n = k^{k(t-1)+1}+1$ for some positive integer $t$. Then for any k-edge coloring of $G$, there exists an induced subgraph $G' \cong Sh(t+1,2)$, such that all edges of the form $[1,a][a,b]$ in $E(G')$ are monochromatic.
	\end{lemma}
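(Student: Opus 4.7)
The plan is to encode the coloring of the relevant ``star-like'' edges of $G_n$ as a $2$-coloring of pairs, and then apply the classical two-color Ramsey theorem. Fix the red-blue coloring $f$ of $E(G)$, which restricts to $E(G_n)$ since $G_n \subseteq G$. For each $p, q \in \{2, 3, \ldots, 2^n+1\}$ with $p < q$, define $c(\{p,q\}) := f([1,p][p,q])$; this is well-defined because $[1,p][p,q]$ is an edge of $G_n$ whenever $1 < p < q$.

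Next I would invoke the standard upper bound $R(s,s) \le \binom{2s-2}{s-1} \le 4^{s-1}$ with $s = 2^t$. The set $\{2, \ldots, 2^n+1\}$ has cardinality $2^n = 2^{2^{t+1}} = 4^{2^t}$, which strictly exceeds $R(2^t, 2^t) \le 4^{2^t-1}$. Hence there exist $a_2 < a_3 < \cdots < a_{2^t+1}$ in $\{2,\ldots,2^n+1\}$ on which $c$ is constant. Setting $a_1 := 1$, I take $G'$ to be the subgraph of $G_n$ induced on the vertex set $\{[a_i, a_j] : 1 \le i < j \le 2^t+1\}$. Because adjacency in $G_n$ depends only on whether the inner coordinates agree, the map $[a_i, a_j] \mapsto [i, j]$ is a graph isomorphism $G' \to G_t$.

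To finish, I would check monochromaticity: an edge of $G'$ of the form $[1,j][j,k]$ pulls back under the isomorphism to the edge $[1, a_j][a_j, a_k]$ of $G$, whose color is $c(\{a_j, a_k\})$ by construction; and the $a_j$'s were chosen precisely so that all such $c$-values coincide. The only nontrivial step is recognizing that the exponent $n = 2^{t+1}$ is tuned exactly so that the elementary bound $R(s,s) \le 4^{s-1}$ suffices when $s = 2^t$; everything else is bookkeeping about how the shift-graph isomorphism interacts with the chosen labels. A minor point of interpretation concerns the word ``induced'' in the statement: since $G$ might carry extra edges among the chosen vertices, the $G'$ produced here is induced in $G_n$ rather than in $G$, but the edge coloring restricts unambiguously from $G$ and the Ramsey argument is unaffected.
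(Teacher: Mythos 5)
Your proof is correct, and it delivers the same conclusion as the paper's (all edges $[1,a_j][a_j,a_k]$ of $G'$ receive one common colour), but by a genuinely more modular route. You observe that the lemma is exactly Ramsey's theorem applied to the auxiliary pair-colouring $c(\{p,q\}) = f([1,p][p,q])$ on $\{2,\dots,2^n+1\}$, and you close the quantitative gap with the Erd\H{o}s--Szekeres bound $R(s,s)\le\binom{2s-2}{s-1}\le 4^{s-1}$, which is precisely why $n=2^{t+1}$ (so that $2^n = 4^{2^t} > 4^{2^t-1}$) suffices. The paper instead unrolls this argument by hand: it repeatedly takes the pivot $[1,\min S]$, keeps the majority-colour half of its forward star, and after $2^{t+1}-1$ halvings obtains $2^{t+1}$ pivots, each with a monochromatic forward star into the surviving nested set; a final pigeonhole over the two colours then extracts $2^t$ pivots sharing a single colour. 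That recursion is the textbook proof of the $4^{s-1}$ bound specialized to the colouring $c$, so the two arguments have identical combinatorial content; yours is shorter and makes the choice of $n$ transparent, while the paper's is self-contained and constructs the monochromatic set explicitly. Your closing remark about ``induced'' is also apt: both proofs really produce a subgraph induced in $G_n$ rather than in $G$, a harmless imprecision already present in the lemma's statement.
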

	
	\begin{proof}
		We construct the required induced subgraph recursively as follows. Let $v_1 = [1, 2] \in V(G_n)$. Let $N^+([1, 2]) = \{[2, a] : 3 \leq a \leq k^{k(t-1)+1}+1\}$. We have$|N^+([1, 2])| = k^{k(t-1)+1} - 1$. In any k-edge coloring of $G$, by pigeonhole principle at least ${1/k}^{th}$ of the edges with one end vertex $[1,2]$ must be of the same color. Therefore, for some $l_1 \in \{1,2...,k\}$  we have  $E^+_{l_1}([1,2])$ with cardinality at least $\Big\lceil\frac{|N^+([1,2])|}{k}\Big\rceil=k^{k(t-1)}$. Hence, we can find $S_1 \subseteq \{3,4,\ldots,k^{k(t-1)+1}+1\}$ such that $|S_1| = k^{k(t-1)}$ and $\{[2,a]: a \in S_1\}$ is a subset of $N^+_{l_1}([1, 2])$. Let $S_1 = \{i_{(1,1)},i_{(1,2)},\ldots,i_{(1,x)}\}$, where $x=k^{k(t-1)}$. Without loss of generality, we may assume that $i_{(1,1)} < i_{(1,2)} < \ldots < i_{(1,x)}$.\par
		
		Now, let $v_2 = [1, i_{(1,1)}]$. Repeating the same arguments for $v_2$, we can find $S_2 \subset S_1$ such that $|S_2| = k^{k(t-1)-1}$ and $\{[i_{(1,1)},a]: a \in S_2\}$ is a subset of  $N^+_{l_2}([1, i_{(1,1)}])$ for some $l_2 \in \{1,2...k\}$. Let $S_2 = \{i_{(2,1)},i_{(2,2)},\ldots,i_{(2,x)}\}$, where $x=k^{k(t-1)-1}$. Without loss of generality we may assume that $i_{(2,1)} < i_{(2,2)} < \ldots < i_{(2,x)}$. Now, let $v_3 = [1,i_{(2,1)}]$ and repeat the same set of arguments.\par
		
		This procedure can be repeated $k(t-1)+1$ times and at this stage we get a set $S_{k(t-1)+1}$ of cardinality 1. Let $S=\{2,i_{(1,1)},i_{(2,1)},\ldots,i_{(x,1)}\}$, where $x=k(t-1)+1$. Note that in each step we have obtained a collection of monochromatic edges with one end vertex $[1,a]$ for each $a\in S$. For a particular $a \in S$, the monochromatic edges with one end vertex $[1,a]$ are of any one of the k-colors. Since $|S|=k(t-1)+1$, by the pigeonhole principle, at least $t$ of them must be of the same color. Therefore, there exists $S' \subseteq S$ such that all edges with one end vertex $[1,a]$ are of the same color for every $a \in S'$ and $|S'| = t $. Now let $V = \{[c,d] \mid c < d \text{ and } c,d \in \{1\} \cup S'$ and $G'$ be the subgraph induced by $V$. Since $|\{1\} \cup S'| = t $,  this $G'$  will be isomorphic to $Sh(t+1,2)$, and all edges in $E(G')$ with one end vertex $[1,j]$ are monochromatic.
	\end{proof}
	
	\begin{theorem}\label{thm2}
		The sequence $\{G_n=Sh(n,2)\}$ of Erdős--Hajnal shift graphs is a Ramsey sequence.
	\end{theorem}
	
	\begin{proof}
		From the definition of Erdős--Hajnal shift graphs, it directly follows that the sequence $\{G_n\}$ is an ascending sequence of graphs. Therefore, to prove that $\{G_n\}$ is a Ramsay sequence, it is enough to prove that for any positive integer $n$, there exists $N > 0$ such that every k-edge coloring of $G_N$ contains a monochromatic copy of $G_n$.\par
		
		Let $\{S_m\}$ be the recurrent sequence defined as follows: $S_1 = k$ and $S_m = k^{kS_{m-1}+1}$  when $m>1$. now For $n>0$, let $N = S_{k(n-2)+2}+1 = k^{kS_{k(n-2)+1}+1}+1$. Consider any k-edge coloring of $G_N$. Applying Lemma \ref{lem1} for $G_N$, we get a subgraph $H_1 \cong G_{t_1}$, where $t_1=S_{k(n-2)+1}+2$ such that all edges in $H_1$ of the form $[1,a][a,b]$ are monochromatic. Let $S_1 = \{i_{(1,1)},i_{(1,2)},\ldots,i_{(1,t_1-1)}\}$, where $i_{(1,1)} < i_{(1,2)} < \ldots < i_{(1,{t_1-1})} $ be such that the edges $\{[1,a][a,b] \in E(H_1): a , b \in S_1\}$ are monochromatic.

        Now, consider the subgraph $H_1'$ induced by the vertex set $\{[c,d] \mid c < d \text{ and } c,d \in  S_1\}$. By applying Lemma \ref{lem1} for $H_1'$, we get a subgraph $H_2 \cong G_{t_2}$, where $t_2=S_{k(n-2)}+2$ such that all edges in $H_2$ of the form $[i_{(1,1)},a][a,b]$ are monochromatic. Let $S_2 = \{i_{(2,1)},i_{(2,2)},\ldots,i_{(2,t_2-1)}\}$, where $ i_{
			(2,1)} < i_{(2,2)} < \ldots < i_{(2,t_2-1)}$ be such that the edges $\{[i_{(1,1)},a][a,b] \in E(H_2): a,b \in S_2\}$ are monochromatic

        This procedure can be repeated $k(n-2)+1$ times. since in each step we are getting a Subgraph $H_m \cong G_{t_m}$ such that all edges in $H_m$ of the form $[i_{(m-1,1)},a][a,b]$ are monochromatic.    

        We have obtained a sequence of graphs 
		$H_1 \supset H_2 \supset \dots \supset H_{k(n-3)+1}$. Since in each step there are k-color options, by the pigeonhole principle, at least $1/k^{th}$ of these graphs (i.e; at least $n-1$) must have the edges $[i_{(j-1,1)},a][a,b]$ where $ a,b \in H_j$ of the same color. Let $H' = \{ H_{a_1}, H_{a_2}, \dots, H_{a_{n-1}}\}$ where $ a_1 < a_2 < \ldots < a_{n-1} $ be the set of such graphs. Consider the set $W = \{ i_{(a_1-1,1)}, i_{(a_2-1,1)}, \dots, i_{(a_{n-1}-1, 1)},i_{(a_{n-1},1)} \}$.
        Let $V = \{[c,d] \mid c < d \text{ and } c,d \in W\}$ and consider the subgraph induced by $V$. Clearly, this subgraph is isomorphic to $G_n$, as $|W| = n$. Since any vertex $[i_{(a_{j}-1,1)},i_{(a_{l}-1,1)}]$ of $V$ is a vertex of $H_{a_j}$ having monochromatic edges. Therefore, all the edges of the subgraph induced by $V$ are monochromatic. Hence, the theorem.

	\end{proof}

    \begin{lemma}\label{lem2}
         For integers $2 < t < n$, there exists a bijection between the set of all $t$-element subsets of $[n]$ and the edge set $E(Sh(n, t-1))$.

    \end{lemma}
    \begin{proof}
    Let $S = \{x_1 < x_2 < \dots < x_t\} \subseteq [n]$. Define
    \[
    \phi(S) = \text{edge between } A = \{x_1 < x_2 < \dots < x_{t-1}\} \text{ and } B = \{x_2 < x_3 < \dots < x_t\}.
    \]
    Clearly $A \sim B$ in $Sh(n,t-1)$, since the last $t-2$ elements of $A$ equal the first $t-2$ elements of $B$.it is easy to see Injectivity as If $\phi(S) = \phi(T)$, then $A \cup B$ gives $S = T$. also for Surjectivity Let $\{A,B\}$ be any edge in $Sh(n,t-1)$ with $A = \{a_1 < \dots < a_{t-1}\}$ and $B = \{a_2 < \dots < a_{t-1} < b_{t-1}\}$. Then $S = A \cup B$ satisfies $\phi(S) = \{A,B\}$.

    Thus $\phi$ is a bijection.
    \end{proof}

    \begin{lemma}\label{lem3}
        For integers $1 < t < n$, there exists an injection
    $\phi \colon E(Sh(n,t)) \longrightarrow E(Sh(N,2)),$
    where $N = \dbinom{n}{t-1}$.
    \end{lemma} 
    \begin{proof}
    Let $W$ be the collection of all $(t-1)$-element subsets of $[n]=\{1,2,\dots,n\}$. 
    Define the map
    \[
    g \colon W \to [N] \quad \text{by}
    \]
    for $A = \{x_1 < x_2 < \dots < x_{t-1}\}$,
    \[
    g(A) \;=\; 1+
    \sum_{j=1}^{x_1-1} \binom{n-j}{t-2}
    \;+\;
    \sum_{i=2}^{t-1} \sum_{j=x_{i-1}+1}^{x_i-1} \binom{n-j}{t-1-i}.
    \]
    here $g$ is a bijection as it gives the 1-based lexicographic rank of $A$ among all $(t-1)$-subsets of $[n]$.

    Now define a map $f$ from the vertices $V(Sh(n,t))$ to the vertices $V(Sh(N,2))$ by
    \[
    f\bigl(\{x_1 < x_2 < \dots < x_t\}\bigr)
    = \bigl[ g(\{x_1 < \dots < x_{t-1}\}), \; g(\{x_2 < \dots < x_t\}) \bigr].
    \] 

    here f is an injective map,   
    let $f(X) = f(Y)$ for two $t$-subsets $X = \{x_1 < \dots < x_t\}$ and $Y = \{y_1 < \dots < y_t\}$.  
    Then $g(\{x_1< \dots< x_{t-1}\}) = g(\{y_1 < \dots< y_{t-1}\})$ and $g(\{x_2< \dots<x_t\}) = g(\{y_2 < \dots < y_t\})$.  
    Since $g$ is bijective, we have $\{x_1,\dots,x_{t-1}\} = \{y_1 < \dots < y_{t-1}\}$ and $\{x_2 < \dots < x_t\} = \{y_2 < \dots < y_t\}$.  
    This implies $x_1 = y_1$ and $x_t = y_t$, so $X = Y$. Hence $f$ is injective.

    For an edge
    \[
    e = \bigl( \{x_1 < \dots < x_t\} \ \text{---}\ \{x_2 < \dots < x_t < x_{t+1}\} \bigr)
    \]
    in $E(Sh(n,t))$, define
    \[
    \phi(e) = \text{the edge } 
    f(\{x_1<\dots<x_t\}) \text{ --- } f(\{x_2<\dots<x_{t+1}\}) \text{ of } Sh(N,2) 
    \] 
    Explicitly,
    {\footnotesize
    $$\phi(e) =
    \Bigl[ g(\{x_1<\dots<x_{t-1}\}), g(\{x_2<\dots<x_t\}) \Bigr]
    \ \text{---}\
    \Bigl[ g(\{x_2<\dots< x_t\}), g(\{x_3<\dots<x_{t+1}\}) \Bigr].$$
    }

    Here $\phi(e)$ is an edge in $Sh(N,2)$ as follows.
    Let $a = g(\{x_1 <\dots< x_{t-1}\})$, $b = g(\{x_2 <\dots< x_t\})$, and $c = g(\{x_3 < \dots < x_{t+1}\})$.
    Then $\phi(e)$ is the pair $[a,b] - [b,c]$.
    Since $b$ is the same in both, the two 2-subsets $\{a,b\}$ and $\{b,c\}$ are adjacent in $Sh(N,2)$ by the definition of the shift graph.

    also $\phi$ is injective as , let $\phi(e_1) = \phi(e_2)$. Then the two edges in $Sh(N,2)$ are the same, so the pairs of $g$-images are identical:
    \[
    \bigl(g(X_1), g(X_2)\bigr) = \bigl(g(Y_1), g(Y_2)\bigr),
    \]
    where $X_1,X_2$ and $Y_1,Y_2$ are the corresponding $(t-1)$-subsets of the edges $e_1$ and $e_2$.  
    Since $g$ is bijective, we get $X_1 = Y_1$ and $X_2 = Y_2$.  
    This implies that the original $t$-subsets forming $e_1$ and $e_2$ are the same. Hence $e_1 = e_2$.
    
    Therefore, $\phi$ is an injection.
    \end{proof}

    \begin{theorem}
    For any $k+1 \geq 3$ positive integers $t, n_1, n_2$, $ \ldots, n_k$, there exists a positive integer $N$ such that if each of the $t$-element subsets of the set $\{1, 2, \ldots, N\}$ is colored with one of the $k$ colors $1, 2, \ldots, k$, then for some integer $i$ with $1 \leq i \leq k$, there is a subset $S$ of $\{1, 2, \ldots, N\}$ containing $n_i$ elements such that every $t$-element subset of $S$ is colored $i$
        
    \end{theorem}

\begin{proof}
    
        Suppose we have $k+1 \ge 3$ positive integers , say $t, n_1, n_2, \dots, n_k$. Let 
        \[
        n = \max\{n_1, n_2, \dots, n_k\}.
        \]
        and  $S$ be the set of all $t$-element subsets of $[N]$.

        From Lemma \ref{lem2} and Lemma \ref{lem3}, we have an injective map from $S$ to $E(Sh(N_1,2))$ where $N_1 = \dbinom{n}{t-2}$.

        Now, from Theorem \ref{thm2} there exists $N_2 > 0$ such that every $k$-edge coloring of $Sh(N_2,2)$ contains a monochromatic $Sh(N_1,2)$.

        From Observation \ref{observe2} there exists $N_3 > 0$ such that $Sh(N_2,2)$ is an induced subgraph of $Sh(N_3, t-1)$.

        As from Lemma \ref{lem2}, $E(Sh(N_3, t-1))$ is in bijection with the set of all $t$-element subsets of $[N_3]$. This implies that every $k$-coloring of the $t$-element subsets of $[N_3]$ contains a set $S' \subseteq [N_3]$ with $|S'| = n$ such that all $t$-element subsets of $S'$ are of the same color.

\end{proof}

	\section{Concluding remarks}
	This paper is a short note that settles an open question in \cite{chartrand-zhang-2021,chartrand-zhang-2020-article} about finding Ramsey sequences with a bounded clique size. Furthermore, we  present an alternative proof of the Generalised Ramsey theorem. Ramsey theory is a potential branch of Mathematics which demands further exploration.\par
	\vspace{0.5cm}
	\noindent \textbf{\large{Declaration:}} We confirm that there are no relationships or situations that could be perceived as a conflict of interest related to this research and no external data is used.\par
	\vspace{0.5cm}
	\noindent \textbf{\large{Acknowledgment:}} The authors thank Romain Bourneuf for his valuable suggestions to improve the manuscript. The first author also thank the University Grants Commission for the financial support offered by the Junior Research Fellowship Scheme. (Ref. No. 221610198643 dated 29 -11-2022).


\begin{thebibliography}{99}
		
		\bibitem{Bal} R. Balakrishnan, K. Ranganathan, A Textbook of Graph Theory, Springer Science and Business Media, 2012.
		
		\bibitem{Beineke -Schwenk -1975} Beineke, L. W, Schwenk, A. J. On a bipartite form ofthe Ramsey problem In Proceedings of the Fifth BritishCombinatorial Conference. Aberdeen: Univ. Aberdeen, (1975).  p. 17–22.
		
		\bibitem{burr-1974} S.~A.~Burr, 
		Generalized Ramsey theory for graphs -- a survey, 
		in: R.~A.~Bari and F.~Harary (eds.), \textit{Graphs and Combinatorics}, 
		Lecture Notes in Mathematics, Vol.~406, Springer, Berlin, 1974, pp.~52--75.
		
		\bibitem{chartrand-zhang-2020-book}
		G.~Chartrand, P.~Zhang,
		Chromatic Graph Theory, Second Edition,
		CRC Press, Boca Raton, 2020.
		
		\bibitem{chartrand-zhang-2021}
		G.~Chartrand, P.~Zhang,
		New directions in Ramsey theory,
		Discrete Math.\ Lett.\ 6 (2021) 84--96.
		
		\bibitem{chartrand-zhang-2020-article}
		G.~Chartrand, P.~Zhang,
		Ramsey sequences of graphs,
		AKCE J.\ Graphs Combin.\ 17 (2020) 646--652.
		
		\bibitem{duffus-lefmann-rodl-1995}
		D.~Duffus, H.~Lefmann, and V.~R\"{o}dl, 
		Shift graphs and lower bounds on Ramsey numbers $r_k(\ell, r)$, 
		\textit{Discrete Mathematics}, 137(1--3) (1995), 177--187.
		
		\bibitem{erdos-hajnal-1968}
		P.~Erd\H{o}s and A.~Hajnal, 
		On the chromatic number of infinite graphs, 
		in: \textit{Theory of Graphs, Proceedings of the Colloquium at Tihany (Hungary, 1966)}, 
		Academic Press, (1968) 83--98.
		
		\bibitem{ErdosHajnal1966}
		P.~Erdős, A.~Hajnal,
		On chromatic number of graphs and set-systems,
		Acta Mathematica Academiae Scientiarum Hungaricae 17 (1966) 61--99.
		
		\bibitem{erdos1935combinatorial}P.~Erdos, G.~Szekeres, A combinatorial problem in geometry, Compositio Mathematica 2 
		(1935) 463–470.
		
		\bibitem{furedi-hajnal-rodl-trotter-1992}
		Z.~F\"{u}redi, P.~Hajnal, V.~R\"{o}dl, and W.~T.~Trotter, 
		Interval orders and shift graphs, 
		\textit{Combinatorica}, 12(1) (1992), 117--123.
		
		\bibitem{graham-et-al-2017}
		R.~L.~Graham, B.~L.~Rothschild, J.~H.~Spencer, R.~Solymosi,
		Ramsey Theory, 3rd ed.,
		Wiley Series in Discrete Mathematics and Optimization, John Wiley \& Sons, Inc., Hoboken, NJ, 2017.
		
		\bibitem{green2015edge}
		R.~Green,
		Edge coloring and Vizing’s theorem definitions,
		REU Report, University of Chicago, 2015.
		
		\bibitem{luce1949method}
		R.~D.~Luce, A.~D.~Perry,
		A method of matrix analysis of group structure,
		Psychometrika 14(2) (1949) 95--116.
		
		\bibitem{ramsey-1930}
		F.~Ramsey,
		On a problem of formal logic,
		Proc.\ Lond.\ Math.\ Soc.\ 30 (1930) 264-286.
		    
		\bibitem{whitney1932coloring}
		H.~Whitney,
		The coloring of graphs,
		Annals of Mathematics 33(4) (1932) .
		
		\bibitem{vizing1964chromatic}
		V.~G.~Vizing,
		On an estimate of the chromatic class of a \( p \)-graph,
		Diskret.\ Analiz.\ 3 (1964) .
		
		
		
	\end{thebibliography}
\end{document}